\documentclass[a4paper,twoside,10pt]{amsart}
\usepackage{amsmath,amsfonts,amssymb,amsthm,mathtools}
\usepackage{todonotes}

\newtheorem{lemma}{Lemma}[section]

\usepackage{color,graphicx}
\usepackage[colorlinks=true,pdfstartview=FitH,pdfview=FitH]{hyperref}
\numberwithin{equation}{section}

\newcommand\mytop[2]{\genfrac{}{}{0pt}{}{#1}{#2}}

\def\qphi#1#2#3#4#5#6{{{}_{#1}\phi_{#2}}\!\left(\genfrac{}{}{0pt}{}{#3}{#4};#5;#6\right)}
\def\expe{{\rm e}}
\def\erfc{\operatorname{erfc}}

\def\iunit{{\rm i}}
\def\eps{\varepsilon}
\def\id{\,{\rm d}}
\def\bigO{{\mathcal O}}
\def\ifrac#1#2{\textstyle\frac{{#1}}{{#2}}\displaystyle}

\def\qP1{q{\rm P}_{\rm I}}
\def\P1{{\rm P}_{\rm I}}
\def\w1{w_{\rm I}}
\def\v1{v_{\rm I}}
\def\b1{B_{\rm I}}
\def\qpr#1#2#3{\left(#1;#2\right)_{#3}}
\def\Eq#1{{\rm E}_q\left(#1\right)}

\title[Exponentially-improved asymptotics for $q$-difference equations]{Exponentially-improved asymptotics for $q$-difference equations: ${}_2\phi_0$ and $\qP1$}
\author{Nalini Joshi}
\address{School of Mathematics and Statistics F07, The University of Sydney, New South Wales 2006, Australia, ORCID ID: 0000-0001-7504-4444}
\email{nalini.joshi@sydney.edu.au}
\author{Adri Olde Daalhuis}
\address{School of Mathematics, University of Edinburgh, Peter Guthrie Tait Road, Edinburgh EH9 3FD UK}
\email{a.oldedaalhuis@ed.ac.uk}
\date{\today}
\dedicatory{Dedicated to Tom Koornwinder on the occasion of his 80th birthday}

\keywords{Basic hypergeometric functions, $q$-difference equations, Asymptotic expansions, Stokes phenomenon, Hyperterminant}
\subjclass[2020]{33D15, 34M30, 34M40, 39A13}

\begin{document}
\begin{abstract}
Usually when solving differential or difference equations via series solutions one encounters
divergent series in which the coefficients grow like a factorial. Surprisingly, in the $q$-world
the $n$th coefficient is often of the size $q^{-\frac12 n(n-1)}$, in which $q\in(0,1)$ is fixed.
Hence, the divergence is much stronger, and one has to introduce alternative Borel and Laplace
transforms to make sense of these formal series. We will discuss exponentially-improved asymptotics
for the basic hypergeometric function ${}_2\phi_0$ and for solutions of the $q$-difference first 
Painlev\'e equation $\qP1$. These are optimal truncated expansions, and re-expansions in terms of new 
$q$-hyperterminant functions. The re-expansions do incorporate the Stokes phenomena.
\end{abstract}

\maketitle

\section{Introduction and summary}
In solving differential or difference equations via series solutions one typically encounters Gevrey 1 divergent series
of the form $\hat{w}(\eps)=\sum_n a_n\eps^n$ in which $a_n\sim K\Gamma(n+\alpha)/F^{n+\alpha}$ as $n\to\infty$. Hence, the growth of the coefficients is of the form `factorial over power'. 
To make
sense of these formal series one introduces the Borel transform via $B(t)=\sum_n\frac{a_n}{\Gamma(n+\alpha)} t^n$. This
infinite series has a finite radius of convergence and the singularities in the finite Borel plane (the complex $t$-plane)
will cause the Stokes-phenomenon, that is, the birth of exponentially-small terms when Stokes curves are crossed. 
Each of these singularities is linked to its own exponentially-small scale. 
To return to the $\eps$-plane one
uses the Laplace transform $\int_0^\infty \expe^{-t/\eps}B(t)\id t$.
For more details see, for example, \cite{Mitschi2016}.

In solving $q$-difference equations, but also in the definition of the basic hypergeometric functions, we will encounter
a very different type of divergence. We will have an extra parameter $q$ and in this article we will assume that $|q|<1$.
In \S\ref{S:SpecialFunct} we will introduce all the special notation.
The typical growth that we will encounter is $a_n\sim Kq^{-\frac12 n(n-1)}/F^n$, as $n\to\infty$. Hence, the divergence
is much stronger, and as a consequence we have to use different Borel and Laplace transforms. 
They are introduced in \S\ref{S:qBL}.
The $q$-Borel transform
is, obviously, $B_q(t)=\sum_n a_n q^{\frac12 n(n-1)} t^n$. In the interesting cases it will have a finite radius of
convergence, and the singularities in the Borel plane will still cause the Stokes phenomenon, but now all the finite
singularities are linked to the same exponentially-small scale! For the corresponding $q$-Laplace transform we have
to introduce an alternative for $\expe^{-t/\eps}$.
The growth of this alternative exponential function $\Eq{t}$
is, in fact, in between algebraic and exponential, and this is the reason that all the Borel singularities
are linked to the same exponentially-small scale.

In Gevrey asymptotics an optimal truncated asymptotic expansion can be re-expanded
in terms of hyperterminants. These level 1 hyperasymptotic approximations determine
uniquely the solutions not only numerically, but also asymptotically. The first
hyperterminant is the simplest function with a Stokes phenomenon in its
asymptotic tail, and as a consequence, the level 1 hyperasymptotic expansion
incorporates the Stokes phenomenon of our original problem. See \cite{OD1998}.

In \S\ref{Sect:qhyperterminant} we introduce a $q$ version of the 
traditional hyperterminant function.
It will be the simplest function incorporating the $q$-Stokes phenomenon
in its asymptotic tail. We will
derive many of its properties, because this $q$-hyperterminant is the main
building block in the re-expansions of the optimal truncated asymptotic expansions.
One of our main results is a complete uniform asymptotic approximation 
of the $q$-hyperterminant in terms of the error functions $\erfc$. 
In this way we demonstrate that the $q$-Stokes phenomenon also has
`Berry-smoothing'. See \cite{Berry1988}.

Basic hypergeometric functions $\qphi{r}{s}{\mathbf{a}}{\mathbf{b}}{q}{z}$, with 
$r>s+1$, are generally divergent with the $q$-growth mentioned above. 
As a first application we will study in \S\ref{S:2phi0} the formal series $\qphi{2}{0}{a,b}{-}{q}{z}$
and show that the singularities in its $q$-Borel plane are just
simple poles. We will use this information to obtain exponentially improved
asymptotic approximations.

Like the classical first Painlev\'e equation, which appears widely as a model in physics, every solution of its $q$-difference version 
\begin{equation}\label{qP1}
 \qP1 \ :\quad   w(qz)w^2(z)w(z/q)=w(z)-z,\qquad 0<|q|<1,
\end{equation}
is a transcendental function \cite{N10}. This $q$-difference equation appears in
Sakai's list of discrete Painlev\'e equations  \cite{S01}, with an initial-value 
space of type $A_7^{(1)}$ and symmetry group $A_1^{(1)}$, and is conjectured to 
arise as a model in topological string/spectral theory \cite{BGT19}. Yet, hardly
anything is known about the asymptotic behaviours of its solutions near the 
singularities  of the equation, with one exception \cite{J15}. 
In \S\S\ref{Sect:qP1}--\ref{Sect:PolesZeros} we will study, as a second application,
the solutions of 
$\qP1$ with the property $w(z)\sim z$ as $|z|\to 0$. Its divergent asymptotic
expansion has again the same $q$-growth. We locate the singularities in the
$q$-Borel plane (again simple poles), and we compute the corresponding
Stokes multipliers. It is surprising that the dominant Stokes multiplier
seems to be $K_0(q)=\qpr{q}{q}{\infty}^{-2}$. The other Stokes multipliers can be
obtained via a transseries analysis. All of them can be expressed in terms
of $K_0(q)$. These Stokes multipliers are the
coefficients in the $q$-Stokes phenomenon and in the level 1 re-expansions in terms of 
our $q$-hyperterminant.

Two open problems are the following. (1) For the $q$-hyperterminant we give in
\S\ref{Sect:qhyperterminant} many of its
properties and even a full uniform asymptotic expansion across the Stokes curve, but
we still need an efficient method to numerically compute this function in the full
complex plane. 

(2) We still need a rigorous proof for the observation that $K_0(q)=\qpr{q}{q}{\infty}^{-2}$.
In appendix \ref{AppRiccati} we do show that a similar identity holds for 
the Stokes multiplier of a similar but slightly simpler $q$-difference equation.
We include initial steps that might help us for the Stokes multiplier of $\qP1$.


\section{$q$-special functions}\label{S:SpecialFunct}
As mentioned in the introduction we will assume that $|q|<1$, and we use the notation
$\qpr{a}{q}{\infty}=\prod_{n=0}^\infty \left(1-aq^n\right)$, and 
$\qpr{a}{q}{\nu}=\qpr{a}{q}{\infty}/\qpr{aq^\nu}{q}{\infty}$, $a,\nu\in\mathbb{C}$. 
Thus for $n$ a positive integer we have $\qpr{a}{q}{n}=(1-a)(1-aq)\cdots(1-aq^{n-1})$.

There are two $q$-exponential functions 
\begin{equation}\label{littleQexp}
    e_q(z)=\frac1{\qpr{z}{q}{\infty}}=\sum_{n=0}^\infty \frac{1}{\qpr{q}{q}{n}}z^n,
    \qquad |z|<1,
\end{equation}
and
\begin{equation}\label{biqQexp}
    E_q(z)=\qpr{-z}{q}{\infty}=\sum_{n=0}^\infty \frac{q^{\frac12n(n-1)}}{\qpr{q}{q}{n}}z^n,
    \qquad z\in\mathbb{C}.
\end{equation}
Formula \cite[(3.11)]{OD94} gives us for $z>1$ and $0<q<1$
\begin{equation}\label{ExpQlargez}
    \qpr{-z}{q}{\infty}=\frac{q^{-\frac{1}{12}}\sqrt{z}}{\qpr{-q/z}{q}{\infty}}
    \exp\left(\frac{-1}{\ln q}\left(\ifrac12\ln^2z+\ifrac16\pi^2\right)
    +\sum_{n=1}^\infty\frac{\cos\left(2\pi n\frac{\ln z}{\ln q}\right)}{n%
    \sinh\left(2\pi^2n/\ln q\right)}
    \right).
\end{equation}
This can be seen as an asymptotic approximation as $z\to +\infty$, with the observation
that the infinite series in \eqref{ExpQlargez} represents a $q$-periodic function.
Hence, the growth of $\qpr{-z}{q}{\infty}$ lies in between algebraic and exponential growth.

The expansions in \eqref{littleQexp} and \eqref{biqQexp} are special cases of the $q$-binomial
theorem (see \cite[\href{http://dlmf.nist.gov/17.2.E37}{17.2.37}]{NIST:DLMF}) which reads
\begin{equation}\label{qbinomial}
\frac{\qpr{az}{q}{\infty}}{\qpr{z}{q}{\infty}}=\sum_{n=0}^\infty
\frac{\qpr{a}{q}{n}}{\qpr{q}{q}{n}}z^n,\qquad |z|<1.
\end{equation}

We will encounter divergent series when we introduce the basic hypergeometric
functions
\begin{equation}\label{qhypergeom}
\qphi{r}{s}{a_1,\ldots,a_r}{b_1,\ldots,b_s}{q}{z}
=\sum_{n=0}^\infty
\frac{\qpr{a_1}{q}{n}\cdots\qpr{a_r}{q}{n}}{%
\qpr{b_1}{q}{n}\cdots\qpr{b_s}{q}{n}\qpr{q}{q}{n}}
\left(\left(-1\right)^nq^{\frac12n(n-1)}\right)^{1+s-r}z^n.
\end{equation}
Note that in this representation the most important factor for convergence
is the $q^{\frac12n(n-1)}$. The infinite series converges for all $z$ when
$r<s+1$, and for $|z|<1$ when $r=s+1$. In the case when $r>s+1$ the 
series diverges and we have to make sense of this series via the
$q$-Borel-Laplace transform introduced in the next section.

\section{The $q$-Borel-Laplace transform}\label{S:qBL}
Poincar\'e divergent series are typical of the form $\sum n! z^n$. We will see in this article
that in the $q$-world we encounter divergent series of the form
$\sum q^{-\frac12n(n-1)} z^n$. Hence, the divergence is much stronger. Let 
$\hat{w}(z)=\sum_{n=0}^\infty a_n z^n$ be a formal series with 
$|a_n|\leq \left|L^n q^{-\frac12n(n-1)}\right|$,
for large $n$. Then the obvious $q$-Borel transform is 
$B_q(t)=\sum_{n=0}^\infty a_n q^{\frac12n(n-1)}t^n$. This infinite series will typically
converge for $|t|<\widetilde{L}$, and ideally we are able to obtain information about
all the singularities of the analytical continuation of $B_q(t)$ in the complex
$t$-plane. Via a $q$-Laplace transform we will go back to the $z$-plane and obtain
a $q$-Borel-Laplace transform of formal series $\hat{w}(z)$.

In the literature there are many versions of the $q$-Laplace transform.
See for example \cite{Tahara2017}.
We will consider only the ones that are in terms of normal integrals.
Two of them are defined in \cite{Zhang2000} and \cite{Zhang2002}. We will have
to introduce the functions
\begin{equation}\label{newexp}
    \Eq{\tau}=\exp\left(\frac{\left(\ln(\tau/\sqrt{q})\right)^2}{2\ln q}\right)
    =q^{\frac18}\tau^{-\frac12}\exp\left(\frac{\ln^2 \tau}{2\ln q}\right),
\end{equation}
and the theta function with base $q$ is
\begin{equation}\label{theta}
    \theta_q\left(\tau\right)=\sum_{n=-\infty}^\infty
    q^{\frac12 n(n-1)}\tau^n=(q,-\tau,-q/\tau;q)_\infty.
\end{equation}
Both $F(\tau)=\Eq{\tau}$ and $F(\tau)=1/\theta_q\left(\tau\right)$ have
the properties
\begin{equation}\label{thetaprop}
    F\left(\tau^{-1}\right)=\tau F\left(\tau\right),
    \qquad
    F\left(q^n\tau\right)=\tau^{n}q^{\frac12 n(n-1)}F\left(\tau\right).
\end{equation}
In the case of $F(\tau)=\Eq{\tau}$ the second property holds for all $n\in\mathbb{C}$,
and in the case of $F(\tau)=1/\theta_q\left(\tau\right)$ it holds 
for all $n\in\mathbb{Z}$. We obtain
\begin{gather}
\begin{split}\label{theta0}
    \int_0^{\infty}\frac{1}{\theta_q\left(\tau\right)}\frac{\id \tau}{\tau}
    &=\sum_{n\in\mathbb{Z}} \int_{q^{n+1}}^{q^n}\frac{1}{\theta_q\left(\tau\right)}\frac{\id \tau}{\tau}
    =\sum_{n\in\mathbb{Z}} \int_q^1\frac{1}{\theta_q\left(q^n t\right)}\frac{\id t}{t}\\
    &=\sum_{n\in\mathbb{Z}} \int_q^1\frac{t^nq^{\frac12 n(n-1)}}{\theta_q\left( t\right)}\frac{\id t}{t}
    =\int_q^1\frac{\id t}{t}=-\ln q,
\end{split}
\end{gather}
and more importantly
\begin{equation}\label{thetan}
    C_q\int_0^{\infty}t^{n-1}\Eq{t/z}\,\id t=
    \frac{-1}{\ln q}\int_0^{\infty}\frac{t^{n-1}}{\theta_q\left(t/z\right)}\id t
    =q^{-\frac12 n(n-1)} z^n,
\end{equation}
in which
\begin{equation}\label{Cq}
    C_q=\frac1{\sqrt{-2\pi\ln q}}.
\end{equation}

From \eqref{thetaprop} we obtain that the function $\tau\mapsto\theta_q(\tau)\Eq{\tau}$ 
is $q$-periodic, which also follows from \eqref{ExpQlargez} because we have
\begin{equation}\label{qperiodic}
    \theta_q\left(\tau\right)\Eq{\tau}=q^{\frac1{24}}\left(q;q\right)_\infty
    \exp\left(\frac{-\pi^2}{6\ln q}
    +\sum_{n=1}^\infty\frac{\cos\left(2\pi n\frac{\ln\tau}{\ln q}\right)}{n%
    \sinh\left(2\pi^2n/\ln q\right)}
    \right).
\end{equation}
From the definition of $\Eq{\tau}$ it follows that its growth lies 
in between algebraic and exponential as $|\tau|\to \infty$
but also as $\tau\to 0$. The right-hand side of \eqref{qperiodic} is $q$-periodic. Hence,
$1/\theta_q\left(\tau\right)$ has the same growth property.

Now we have all the tools to introduce the $q$-Laplace transforms for a function $B_q(t)$:
\begin{equation}\label{qBorelLaplace1}
    w(z)=
    C_q\int_0^{\infty}
    B_q(t)\Eq{t/z}\frac{\id t}{t},
\end{equation}
\begin{equation}\label{qBorelLaplace2}
    \widetilde{w}(z)=\frac{-1}{\ln q}\int_0^{\infty}
    \frac{B_q(t)}{\theta_q\left(t/z\right)}\frac{\id t}{t}.
\end{equation}
It follows from \eqref{thetan} that both of these functions have the asymptotic
expansion $\sum_{n=0}^\infty a_n z^n$ as $z\to0$.

We will mainly use Laplace transform \eqref{qBorelLaplace1}, because it will give us a bit more freedom.
One reason is that the second property of \eqref{thetaprop} holds for all $n\in\mathbb{C}$.
However, most of the results below also hold if we use \eqref{qBorelLaplace2}.

\section{The $q$-hyperterminant function}\label{Sect:qhyperterminant}
The hyperterminant function that we introduce in this section is a $q$ version of the ones that we
use in hyperasymptotics. See for example \cite{OD1998}.

Let
\begin{equation}\label{qhyperterminant}
   F_q\left(z;\mytop{N}{\sigma}\right)=\sigma^{1-N}
    C_q\int_0^{\infty}
    \frac{t^{N-1}\Eq{t}}{t+\sigma z}\id t,\qquad\qquad |\arg(\sigma z)|<\pi.
\end{equation}
Then we have the following identities:
\begin{equation}\label{qhyper1}
   F_q\left(z;\mytop{N}{\sigma}\right)=
   \sum_{m=0}^{M-1}\frac{\left(-1\right)^m}{\sigma^{N+m} z^{m+1}}q^{-\frac12(N+m)(N+m-1)}
    +\left(-z\right)^{-M} F_q\left(z;\mytop{N+M}{\sigma}\right),
\end{equation}
that is, $z^{-1}\sigma^{-N} q^{-\frac12 N(N-1)}\qphi{2}{0}{0,q}{-}{q}{\frac1{\sigma zq^{N}}}$
and $F_q\left(z;\mytop{N}{\sigma}\right)$ have the same formal expansion.

\begin{equation}\label{qhyper2}
   F_q\left(z;\mytop{N}{\sigma q^{-j}}\right)=
   \frac{q^{\frac12j(j+1)}}{\sigma^j}F_q\left(z;\mytop{N-j}{\sigma}\right),
\end{equation}
and hence we can make the $\sigma$-entry always $1$
\begin{equation}\label{qhyper3}
   F_q\left(z;\mytop{N}{\sigma}\right)=
   \sigma^{\frac{\ln\sigma}{2\ln q}-\frac12}
   F_q\left(z;\mytop{N+\frac{\ln\sigma}{\ln q}}{1}\right),
\end{equation}
or make the $N$-entry 1 via
\begin{equation}\label{qhyper4}
   F_q\left(z;\mytop{N}{\sigma}\right)=
   \sigma^{1-N}q^{-\frac12(N-1)(N-2)}
   F_q\left(z;\mytop{1}{\sigma q^{N-1}}\right).
\end{equation}
Recurrence relations: $w(z)=F_q\left(z;\mytop{N}{\sigma}\right)$ does satisfy
\begin{equation}\label{qhyper5}
   q^{1-N}w(zq)+ z\sigma w(z)=\sigma^{1-N}q^{-\frac12 N(N-1)},
\end{equation}
and hence
\begin{equation}\label{qhyper6}
   q^{1-N} w(zq)
   +\left(\sigma z-q^{1-N}\right)w(z)-\frac{\sigma z}{q}w(z/q)=0.
\end{equation}
Note that $w(z)=z^{N-1}\Eq{-\sigma z}$ is an `elementary' solution of \eqref{qhyper6}. 
Compare the right-hand side of \eqref{qhyper8}. 
(This observation can be used to derive \eqref{qhyper5} from \eqref{qhyper6}.)
It follows from \eqref{qhyper1} and 
\eqref{newexp} that, unfortunately, $w(z)=F_q\left(z;\mytop{N}{\sigma}\right)$
is a dominant solution of \eqref{qhyper6} for both $z\to 0$ and $z\to\infty$.
Corresponding normalising conditions are
\begin{equation}\label{qhyper6a}
   \sum_{j=0}^\infty \frac{\left(\sigma z q^{N-1}\right)^{-j}}{(q;q)_j}
   F_q\left(zq^j;\mytop{N}{\sigma}\right)=
   \frac{\sigma^{-N}q^{-\frac12 N(N-1)}}{z\left(\frac{q^{1-N}}{\sigma z};q\right)_\infty},
   \qquad\qquad\left|\sigma z q^{N-1}\right|>1,
\end{equation}
\begin{equation}\label{qhyper6b}
   \sum_{j=0}^\infty \frac{\left(\sigma z q^{N-2}\right)^{j}}{(q;q)_j}
   F_q\left(zq^{-j};\mytop{N}{\sigma}\right)=
   \frac{\sigma^{1-N}q^{-\frac12(N-1)(N-2)}}{\left(\sigma zq^{N-1};q\right)_\infty},
   \qquad\qquad\left|\sigma z q^{N-1}\right|<1.
\end{equation}

Reflection formula
\begin{equation}\label{qhyper7}
   z\sigma F_q\left(z;\mytop{N}{\sigma}\right)=
   F_q\left(\frac{1}{z};\mytop{2-N}{\frac1{\sigma}}\right).
\end{equation}
Combining this reflection formula with \eqref{qhyper1} we identify
$F_q\left(1;\mytop{\frac12}{1}\right)=F_q\left(1;\mytop{\frac32}{1}\right)
=\frac12q^{\frac18}$.

Stokes phenomenon:
\begin{equation}\label{qhyper8}
   F_q\left(z\expe^{2\pi\iunit};\mytop{N}{\sigma}\right)
   -F_q\left(z;\mytop{N}{\sigma}\right)=
   2\pi\iunit C_q \left(-z\right)^{N-1}\Eq{-\sigma z}.
\end{equation}
Stokes smoothing: Taking $|z|$ large and $N=\left|\frac{\ln (z\sigma)}{\ln q}\right|+\bigO(1)$, then
\begin{gather}
\begin{split}\label{qhyper9}
   F_q\left(z;\mytop{N}{\sigma}\right)&\sim
   -\pi\iunit C_q\left(-z\right)^{N-1}\Eq{-\sigma z}\erfc\left(\iunit\tau_p N/\sqrt2\right)\\
   &\qquad +\frac{q^{-\frac12 N(N-1)}}{\sigma^N z}\left(\frac1{1+\frac{q^{\frac12-N}}{\sigma z}}
   -\frac1{\ln\left(-\sigma z q^{N-\frac12}\right)}\right),
\end{split}
\end{gather}
as $z\to \infty$, with $\tau_p=\frac{\ln\left(-\sigma zq^{N-\frac12}\right)}{N\sqrt{-\ln q}}$. Proof:

Below we use the substitution
$t=q^{\frac12-N}\expe^{\tau N\sqrt{-\ln q}}$ and obtain
\begin{gather}
\begin{split}\label{qhyper10}
   F_q\left(z;\mytop{N}{\sigma}\right)&=
   \frac{q^{\frac18}C_q}{\sigma^{N-1}}\int_0^{\infty}
    \frac{t^{N-\frac32}\expe^{\frac{\ln^2 t}{2\ln q}}}{t+\sigma z}\id t\\
    &=\frac{N\sqrt{-\ln q}C_q}{\sigma^N zq^{\frac12 N(N-1)}}\int_{-\infty}^{\infty}
    \frac{\expe^{-\frac12 N^2\tau^2}f(\tau)}{\tau_p-\tau}\id\tau\\
    &=\frac{N\sqrt{-\ln q}C_q}{\sigma^N zq^{\frac12 N(N-1)}}\left(\int_{-\infty}^{\infty}
    \frac{\expe^{-\frac12 N^2\tau^2}f(\tau_p)}{\tau_p-\tau}\id\tau+
    \int_{-\infty}^{\infty}
    \expe^{-\frac12 N^2\tau^2}g(\tau)\id\tau\right)\\
    &\sim  -\pi\iunit C_q\left(-z\right)^{N-1}\Eq{-\sigma z}\erfc\left(\iunit\tau_p N/\sqrt2\right)
    +\frac{g(0)}{\sigma^N zq^{\frac12 N(N-1)}},
\end{split}
\end{gather}
in which we have used
\begin{equation}\label{qhyper11}
   f(\tau)=\frac{\tau_p-\tau}{1-\exp((\tau-\tau_p)N\sqrt{-\ln q})},\qquad
   g(\tau)=\frac{f(\tau)-f(\tau_p)}{\tau_p-\tau},\qquad f(\tau_p)=\frac1{N\sqrt{-\ln q}},
\end{equation}
and the identity
\begin{equation}\label{qhyper12}
    \int_{-\infty}^{\infty}
    \frac{\expe^{-\frac12 N^2\tau^2}}{\tau_p-\tau}\id\tau=\pi\iunit \expe^{-\frac12 N^2\tau_p^2}
    \erfc\left(\iunit\tau_p N/\sqrt2\right).
\end{equation}
In fact it is easy to obtain more terms in the uniform asymptotic expansion 
via the Taylor-series of $g(\tau)$ about $\tau=0$, because
$g(\tau)=g_1(\tau)-g_2(\tau)$ with $g_1(\tau)=\frac{1}{1-\exp((\tau-\tau_p)N\sqrt{-\ln q})}$
and $g_2(\tau)=\frac{f(\tau_p)}{\tau_p-\tau}$. The Taylor series for $g_2(\tau)$ is just the
geometric progression, and for $g_1(\tau)$ we can use the nonlinear differential equation
$g_1'(\tau)=N\sqrt{-\ln q}\left(g_1^2(\tau)-g_1(\tau)\right)$. 
We write $\displaystyle g_1(\tau)=\sum_{n=0}^\infty c_n\tau^n$, and obtain 
\begin{equation}\label{unifromcoeff}
c_0=\frac1{1+\frac{q^{\frac12-N}}{\sigma z}},\qquad
    (n+1)c_{n+1}=N\sqrt{-\ln q}\left(\sum_{m=0}^n c_m c_{n-m}-c_n\right),
\end{equation}
$n=0,1,2,\ldots$. This results in the uniform asymptotic expansion
\begin{gather}
\begin{split}\label{qhyper13}
    F_q\left(z;\mytop{N}{\sigma}\right)&\sim
    -\pi\iunit C_q\left(-z\right)^{N-1}\Eq{-\sigma z}\erfc\left(\iunit\tau_p N/\sqrt2\right)\\
    &\qquad +\frac{q^{-\frac12 N(N-1)}}{\sigma^N z}
    \sum_{n=0}^\infty \frac{2^n\left(\frac12\right)_n}{
    N^{2n}}\left(c_{2n}-\frac{\tau_p^{-2n-1}}{N\sqrt{-\ln q}}\right).
\end{split}
\end{gather}

\section{Exponentially-improved asymptotics for ${}_2\phi_0$}\label{S:2phi0}
We did mention just below \eqref{qhypergeom} that for the definition of ${}_r\phi_s$,
when $r>s+1$, the $q$-Borel-Laplace transform is needed to make sense of these formal series.
In \cite{Adachi2019} it is demonstrated that these formal series are definitely $q$-Borel summable.
It is a very good source for more details for the general case.

In this section we give details for the case ${}_2\phi_0$. We do give the Stokes phenomenon
for this function, and via exponentially-improved asymptotics we do define this
$q$-Borel sum of ${}_2\phi_0$ both analytically and numerically.

We start with the formal series
\begin{equation}\label{2phi0}
    \hat{w}(z)=\qphi{2}{0}{a,b}{-}{q}{z}
    =\sum_{n=0}^\infty\frac{\qpr{a}{q}{n}\qpr{b}{q}{n}}{\qpr{q}{q}{n}}q^{-\frac12n(n-1)}\left(-z\right)^n,
\end{equation}
which does satisfy the recurrence relation
\begin{equation}\label{2phi0recur}
    \left(1-abqz\right)\hat{w}(zq^2)-\left(1-(a+b)zq\right)\hat{w}(zq)-zq\hat{w}(z)=0.
\end{equation}
Its $q$-Borel transform is
\begin{equation}\label{2phi1}
    B_q(t)=\qphi{2}{1}{a,b}{0}{q}{-t}
    =\sum_{n=0}^\infty\frac{\qpr{a}{q}{n}\qpr{b}{q}{n}}{\qpr{q}{q}{n}}\left(-t\right)^n,
    \qquad\qquad |t|<1,
\end{equation}
with recurrence relation
\begin{equation}\label{2phi1recur}
    abt\,B_q(tq^2)-\left(1+(a+b)t\right)B_q(tq)+(1+t)B_q(t)=0.
\end{equation}
To obtain information about the singularities in the complex $t$-plane we use
Heine's second transformation
\cite[\href{http://dlmf.nist.gov/17.6.E7}{17.6.7}]{NIST:DLMF} which can be presented
as
\begin{equation}\label{2phi1cont}
    B_q(t)=\qphi{2}{1}{a,b}{0}{q}{-t}=\frac{\qpr{-bt}{q}{\infty}}{\qpr{-t}{q}{\infty}}
    \qphi{1}{1}{b}{-bt}{q}{-at}.
\end{equation}
Hence, we obtain that $B_q(t)$ has simple poles at $t=-q^{-j}$,
$j=0,1,2,\ldots$, with local behaviour
\begin{equation}\label{btlocal}
    B_q(t)\approx\frac{d_j(a,b)}{1+tq^j},
    \qquad\qquad j=0,1,2,\ldots,
\end{equation}
with $d_{-1}(a,b)=0$, $d_0(a,b)=\frac{\qpr{a}{q}{\infty}\qpr{b}{q}{\infty}}{\qpr{q}{q}{\infty}}$
and from \eqref{2phi1recur} we obtain the recurrence relation
\begin{equation}\label{borelrecur}
    \left(1-q^{-j}\right)d_j=\left(1-(a+b)q^{-j}\right)d_{j-1}+abq^{-j}d_{j-2},\qquad\qquad
    j=1,2,3,\ldots.
\end{equation}
This information can be used in the Cauchy integral representation for the $n$th
coefficient in \eqref{2phi1} and this gives us the large $n$ asymptotic expansion
for the late coefficients:
\begin{equation}\label{2phi1late}
   \frac{\qpr{a}{q}{n}\qpr{b}{q}{n}}{\qpr{q}{q}{n}}\sim \sum_{j=0}^\infty d_j(a,b) q^{jn},
   \qquad\qquad {\rm as}~n\to\infty.
\end{equation}

More importantly, we do obtain $q$-Borel-Laplace transforms of formal series
$\qphi{2}{0}{a,b}{-}{q}{z}$:
\begin{equation}\label{twoBorelLaplace}
    w(z)=
    C_q\int_0^{\infty}
    \qphi{2}{1}{a,b}{0}{q}{-t}\Eq{t/z}\frac{\id t}{t}.
\end{equation}
Let us for the moment assume that $0<q<1$.
This integral representation is well defined for all $\arg z$, that is, for $z$
on the Riemann surface of the complex logarithm. When we push the
contour of integration across the poles at, say $t=\expe^{\pi \iunit} q^{-j}$ we obtain
\begin{equation}\label{2phi0Stokes}
    {w}\left(\expe^{-2\pi \iunit} z\right)-{w}(z)\sim 2\pi\iunit C_q \Eq{-1/z}
    \sum_{n=0}^\infty d_n(a,b) q^{\frac12 n(n+1)}\left(-z\right)^n,
\end{equation}
as $z\to 0$. This is the Stokes phenomenon.

To obtain the exponentially improved expansion we start with the Cauchy integral
representation of a truncated version of \eqref{2phi1}
\begin{equation}\label{2phi1trunc}
    B_q(t)
    =\sum_{n=0}^{N-1}\frac{\qpr{a}{q}{n}\qpr{b}{q}{n}}{\qpr{q}{q}{n}}\left(-t\right)^n
    +\frac{t^N}{2\pi\iunit}\oint_{\{0,t\}}\frac{\tau^{-N}B_q(\tau)}{\tau -t}\id\tau,
\end{equation}
in which the closed contour of integration encloses $0$ and $t$, but not the poles
at $\tau=-q^{-j}$. We `blow-up' the contour of integration to include the contributions
of the simple poles as well and obtain
\begin{equation}\label{2phi1trunc2}
    B_q(t)
    =\sum_{n=0}^{N-1}\frac{\qpr{a}{q}{n}\qpr{b}{q}{n}}{\qpr{q}{q}{n}}\left(-t\right)^n
    +\left(-t\right)^N\sum_{j=0}^\infty \frac{d_j(a,b)q^{(N-1)j}}{t+q^{-j}}.
\end{equation}
We use this result in \eqref{twoBorelLaplace} and obtain for the $q$-Borel-Laplace
transform the re-expansion
\begin{equation}\label{2phi0improved}
    w(z)\sim\sum_{n=0}^{N-1}\frac{\qpr{a}{q}{n}\qpr{b}{q}{n}}{\qpr{q}{q}{n}}
    q^{-\frac12n(n-1)}\left(-z\right)^n
    -\left(-z\right)^{N-1}\sum_{j=0}^\infty d_j(a,b)
    F_q\left(\frac1z;\mytop{N}{q^{-j}}\right),
\end{equation}
as $z\to 0$. In the case that we take $|z|$ small and take the optimal number of terms
$N=\left|\frac{\ln z}{\ln q}\right|+\bigO(1)$, then \eqref{2phi0improved} is an
exponentially improved expansion, that is, we do incorporate exponentially small
terms that do capture the Stokes phenomenon \eqref{2phi0Stokes}.

\section{$\qP1$}\label{Sect:qP1}
We did mention in the introduction that we will study solutions of \eqref{qP1}
with the property $w(z)\sim z$ as $|z|\to 0$.
A formal series expansion of such a solution will be of the form
$\widehat{\w1}(z)=\sum_{n=0}^\infty c_n\left(q^3\right) z^{3n+1}$,
that is, all the coefficients are functions of $q^3$.
For that reason we write our solution of \eqref{qP1} as $w(z)=zv(z^3;q^3)$. This new
function $v(z)=v(z;q)$ does satisfy
\begin{equation}\label{qP1a}
 z v(qz)v^2(z)v(z/q)=v(z)-1.
\end{equation}
The formal solution is of the form
\begin{equation}\label{formal}
    \widehat{\v1}(z)=\sum_{n=0}^\infty c_n(q) z^n,
\end{equation}
with $c_0=1$ and
\begin{equation}\label{crecur}
    c_{n+1}=\sum_{m=0}^n\sum_{k=0}^m\sum_{\ell=0}^{n-m} c_k c_{m-k} c_\ell c_{n-m-\ell}
    q^{k-\ell},\quad n=0,1,2,\cdots.
\end{equation}
In the appendix in Lemma \ref{c_growth} we show that
$\left|c_n\right|\leq \left(\frac{256}{27}\right)^n\left|q\right|^{-\frac12 n(n-1)}$
for $n=0,1,2,\ldots$.
Hence, we introduce the $q$-Borel transform:
\begin{equation}\label{qBorel}
    \b1(t)=\sum_{n=0}^\infty q^{\frac12 n(n-1)}c_n \left(-t\right)^n.
\end{equation}
Note that in the case $0<q<1$ the coefficients $c_n$ are all positive. For that reason
we add an extra factor $\left(-1\right)^n$ in \eqref{qBorel}, and in that way move
the singularities in the complex $t$-plane to the negative real axis. We will have
to add an extra minus sign in the $q$-Laplace transform.

Via Pad\'e approximants, we  observe that for $\b1(t)$ the only singularities in the 
complex $t$-plane are simple poles at $t=-1,-q^{-1}, -q^{-2},\ldots$. 
Thus $(-t;q)_\infty \b1(t)$ is
an entire function. In section \ref{Sect:StPh} we confirm the locations of these
simple poles, because these poles are connected to the `exponentially' small terms
that can be switched on via the Stokes phenomenon.

The $q$-Borel-Laplace transform of $\widehat{\v1}(z)$ is
\begin{equation}\label{qBorelLaplace}
    \v1(z)=C_q\int_0^{\infty}
    \b1(t)\Eq{\expe^{\pi\iunit}t/z}\frac{\id t}{t},
\end{equation}
in which we initially focus on the sector $\arg z\in (0,2\pi)$.

\section{The Stokes multipliers}\label{Sect:StMult}
We observed that $\b1(t)$ is a meromorphic function with simple poles at 
$t=-1,-q^{-1},-q^{-2},\ldots$. It will make sense to call the residues at these 
poles \emph{Stokes multipliers} $K_j(q)$, 
because the local behaviour $\b1(t)\approx\frac{K_j(q)}{t+q^{-j}}$ near $t=-q^{-j}$ 
gives us the asymptotic result
\begin{equation}\label{Kj}
    q^{\frac12 n(n-1)}c_n\sim \sum_{j=0}^\infty K_j(q) q^{j(n+1)},
\end{equation}
as $n\to\infty$.
We can use this result to compute many of the Taylor coefficients of $K_0(q)$ numerically, 
and it seems that $K_0(q)=1+2q+5q^2+10q^3+20q^4+36q^5+65q^6+110 q^7+185q^8+\cdots$, which suggests that we have
\begin{equation}\label{K0}
    K_0(q)=\frac{1}{\left(q;q\right)_\infty^2}.
\end{equation}
Taking values for $q$ in the interval $(\frac12,1)$ does numerically verify this identification.
In Lemma \ref{dinfo} we do show that $d_n(q)=q^{\frac12 n(n-1)}c_n$
is a polynomial in $q$ of degree $n(n-1)$ with positive integer coefficients. The first $n$ coefficients are the same for 
$d_n, d_{n+1}, d_{n+2}, \ldots$. For example,
$d_5=1+2q+5q^2+10q^3+20q^4+32q^5+\cdots +q^{20}$.
In Lemma \ref{localStokes} we do show that by computing $d_n$, we do compute the first $n$
Taylor coefficients of $K_0(q)$. These calculations also suggest that \eqref{K0} is correct.

When we use \eqref{K0} in \eqref{Kj} we can compute the Taylor coefficients of $K_1(q)$
numerically, and it seems that
\begin{equation}\label{K1}
    \frac{K_1(q)}{K_0(q)}=-q^{-1}\left(4 +\frac{5q}{1-q}\right).
\end{equation}
Next, we use \eqref{K0} and \eqref{K1} in \eqref{Kj} we can compute the Taylor coefficients of $K_2(q)$
numerically, and it seems that
\begin{equation}\label{K2}
    \frac{K_2(q)}{K_0(q)}=-q^{-4}\left(4-q-10q^2-\frac{q^3\left(24+10q-9q^2\right)}{\left(1-q\right)\left(1-q^2\right)}\right).
\end{equation}
Again, taking values for $q$ in the interval $(\frac12,1)$ does numerically verify these identifications.

Similarly (but a lot of work)
\begin{gather}
\begin{split}\label{K3}
    \frac{K_3(q)}{K_0(q)}&=-q^{-9}\Biggl(
    4+8q+2q^2-10q^3-23q^4-15q^5+6q^6\Biggr.\\
    &\qquad\qquad\Biggl.+\frac{q^7\left(52+60q+38q^2-12q^3-20q^4+7q^5\right)}{
    \left(1-q\right)\left(1-q^2\right)\left(1-q^3\right)}\Biggr),
\end{split}
\end{gather}
and
\begin{equation}\label{K4}
    \frac{K_4(q)}{K_0(q)}=-q^{-16}\left(
    4+8q+20q^2+22q^3+5q^4-34q^5-71q^6-74q^7-51q^8
    +\cdots\right).
\end{equation}
In \S\ref{Sect:Trans} we will show that these  expressions for $K_j(q)/K_0(q)$, $j=1,2,3,4$
are correct.

\section{The Stokes phenomenon and exponentially improved asymptotics}\label{Sect:StPh}
From the details above it should be obvious that both $\v1(z)$ and 
$\v1\left(\expe^{2\pi\iunit}z\right)$ have the same asymptotic
expansion \eqref{formal}. Hence, the difference has to be exponentially small.
Starting with $\arg z\in (0,2\pi)$ we can combine the $q$-Borel-Laplace
integral representations of both functions into the same integral, but with a contour
of integration that encircles the simple poles of $\b1 (t)$. In this way we obtain
the Stokes phenomenon
\begin{gather}
\begin{split}\label{Stokesph}
    \v1(z)-\v1\left(\expe^{2\pi\iunit}z\right)&\sim 
    2\pi\iunit C_q\sum_{j=0}^\infty q^j K_j(q)\Eq{q^{-j} z^{-1}}\\
    &=2\pi\iunit C_q\Eq{z^{-1}}\sum_{j=0}^\infty q^{\frac12 j(j+3)} K_j(q)z^{j}.
\end{split}
\end{gather}
One important observation is the following. If the singularities $t=-q^{-j}$ would 
have been singularities in a normal Borel plane, then they would contribute at 
different exponential levels, however, here it should be obvious from the right-hand 
side of \eqref{Stokesph} that they contribute at the same exponential
level. It seems from \S\ref{Sect:StMult} that $K_j(q)=q^{-j^2}\bigO(1)$, as $j\to\infty$. 
If that is the case then the sums in \eqref{Stokesph} will be divergent.

With an analysis that is very similar to the one at the end of \S\ref{S:2phi0} we
obtain for the $q$-Borel-Laplace transform the re-expansion
\begin{equation}\label{Painlimproved}
    \v1(z)\sim\sum_{n=0}^{N-1}c_n z^n
    -z^{N-1}\sum_{j=0}^\infty q^j K_j(q)
    F_q\left(\frac{-1}{z};\mytop{N}{q^{-j}}\right),
\end{equation}
as $z\to 0$. In the case that we take $|z|$ small and take the optimal number of terms
$N=\left|\frac{\ln z}{\ln q}\right|+\bigO(1)$, then \eqref{Painlimproved} is an
exponentially improved expansion, that is, we do incorporate exponentially small
terms that do capture the Stokes phenomenon \eqref{Stokesph}.

\section{Transseries analysis}\label{Sect:Trans}
We will have a look at exponentially small terms by using a transseries $v(z)=v_0(z)+v_1(z)+v_2(z)+\cdots$
in \eqref{qP1a}, in which $v_0(z)\sim \widehat{\v1}(z)$, see \eqref{formal}, 
$v_1(z)$ is exponentially small, $v_2(z)$ is double exponentially small, and so on. 
For $v_1(z)$ we obtain the linear $q$-difference equation
\begin{equation}\label{w1equation}
    v_0(qz)v_0^2(z)v_1(z/q)+2v_0(qz)v_0(z)v_0(z/q)v_1(z)+v_0(z/q)v_0^2(z)v_1(qz)=z^{-1}v_1(z).
\end{equation}
Using the fact that $v_0(z)\sim 1$ we have approximately
\begin{equation}\label{w1equation2}
    v_1(z/q)+2v_1(z)+v_1(qz)\approx z^{-1}v_1(z).
\end{equation}
If \eqref{Stokesph} is correct then we should have
\begin{gather}
\begin{split}\label{w1equation3}
    v_1(z)\approx C\Eq{z^{-1}}\Longrightarrow
   v_1(z/q)&\approx \frac{C}{z}\Eq{z^{-1}},~~
     v_1(qz)\approx Cqz\Eq{z^{-1}},
\end{split}
\end{gather}
with $C=2\pi\iunit C_{q}K_0\left(q\right)$.
We observe that we have $v_1(z/q)\approx z^{-1}v_1(z)$,
and these two terms are the dominant terms in \eqref{w1equation2}, as $z\to0$.

We can also use this analysis to confirm that in the $q$-Borel plane we will have simple poles at $t=q^{-j}$: We are looking
for exponentially-small solutions of \eqref{w1equation2}. The principle of dominant balance does show us that
the only acceptable balance is $v_1(z/q)= z^{-1}v_1(z)$, and this has the general solution
$v_1(z)=P(z)/\theta_{q}\left(z^{-1}\right)$, 
with $P(z)$ being $q$-periodic, that is, $P(qz)=P(z)$. Recall \eqref{qperiodic}.
Once we have this dominant behaviour we obtain a formal solution of \eqref{w1equation} of the form
of the right-hand side of \eqref{Stokesph} with undetermined, and potentially
$q$-periodic, coefficients $K_j(q)$.
Hence, the $\b1(t)$ in integral representation \eqref{qBorelLaplace} should have simple poles at $t=q^{-j}$.

When we take for $v_1(z)$ the right-hand side of \eqref{Stokesph}, use it in \eqref{w1equation},
and compare the $z$-Taylor coefficients we obtain the equations
\begin{gather}
\begin{split}\label{KjEquations}
   \left(q+4\right) K_{0}+q\left(1-q\right) K_{1}&=0,\\
   \left(q^{4}+2 q^{3}+8 q^{2}+7 q+4\right) K_{0}+q^2\left(2+3 q\right) K_{1}+q^{4}\left(1-q^{2}\right) K_{2}&=0,\\
   \left(q^{9}+2 q^{8}+5 q^{7}+12 q^{6}+19 q^{5}+29 q^{4}+26 q^{3}+18 q^{2}+8 q+4\right) K_{0}\qquad\qquad&\\
   +q^{3}\left(2+7 q+7 q^{2}+4 q^{3}+2 q^{4}\right) K_{1}+q^{6}\left(2+q+2 q^{2}\right) K_{2}+q^{9}\left(1-q^{3}\right) K_{3}&=0,
\end{split}
\end{gather}
confirming \eqref{K1}, \eqref{K2} and \eqref{K3}. 
Hence, all the $K_j(q)$, $j=1,2,3,\ldots$, can be computed via this method.

To go a bit deeper in the exponential asymptotics, and to determine the singularities
that appear on the boundary of the sector of validity of the dominant asymptotic
expansion we present the transseries as
\begin{equation}\label{transseries}
    v(z)=\sum_{n=0}^\infty C^n(z)v_n(z),
\end{equation}
where the free $q$-periodic function $C(z)$ keeps track of the exponential scale.
We substitute this into the original equation \eqref{qP1a} and at level $C^1(z)$ we obtain
equation \eqref{w1equation}. Using $v_0(z)\sim 1$ we obtain that the dominant terms
in \eqref{w1equation} are $v_1(z/q)\sim z^{-1} v_1(z)$. At level $C^2(z)$ we use these
facts for $v_0$ and $v_1$ and obtain the dominant terms $v_2(z/q)\sim -2z^{-1} v_1^2(z)$.
Via induction it can be shown, using just the facts for $v_0$ and $v_1$, that at
level $C^n(z)$ we have the dominant terms
\begin{equation}\label{wn}
    v_n(z/q)\sim \left(-1\right)^{n-1}nz^{-1}v_1^n(z)
    \quad\Longrightarrow\quad v_n(z)\sim n\left(-qz\right)^{n-1}v_1^n(z),
    \quad n=1,2,3\ldots.
\end{equation}
Using this result in \eqref{transseries} we can resum the transseries as
\begin{equation}\label{resum}
    v(z)\sim v_0(z)+\frac{C(z)v_1(z)}{\left(1+C(z)q z v_1(z)\right)^2}.
\end{equation}
Hence, we predict that there will be poles where $1+C(z)q z v_1(z)=0$.

\section{The order 1 solution and the distribution of poles and zeros}\label{Sect:PolesZeros}
There are also regular solutions near $z=0$. When we substitute
\begin{equation}\label{regular}
    w(z)=\sum_{n=0}^\infty e_n z^n,
\end{equation}
into \eqref{qP1} then we obtain for the coefficients $e_0^3=1$, $e_1=\frac{-1}{q+1+q^{-1}}$,
and the recurrence relation
\begin{equation}\label{regularcoeff}
    \left(q+1+q^{-1}\right)e_n=-e_0^2\sum_{m=1}^{n-1} \left(1+q^{n-2m}\right)e_me_{n-m}
    -\sum_{m=1}^{n-1}\sum_{k=0}^{m}\sum_{\ell=0}^{n-m}q^{n-m-2\ell}
    e_k e_{m-k} e_\ell e_{n-m-\ell}.
\end{equation}
Expansion \eqref{regular} seems to be a convergent expansion for a meromorphic solution.

\begin{figure}[h]
\centering\includegraphics[width=0.4\textwidth]{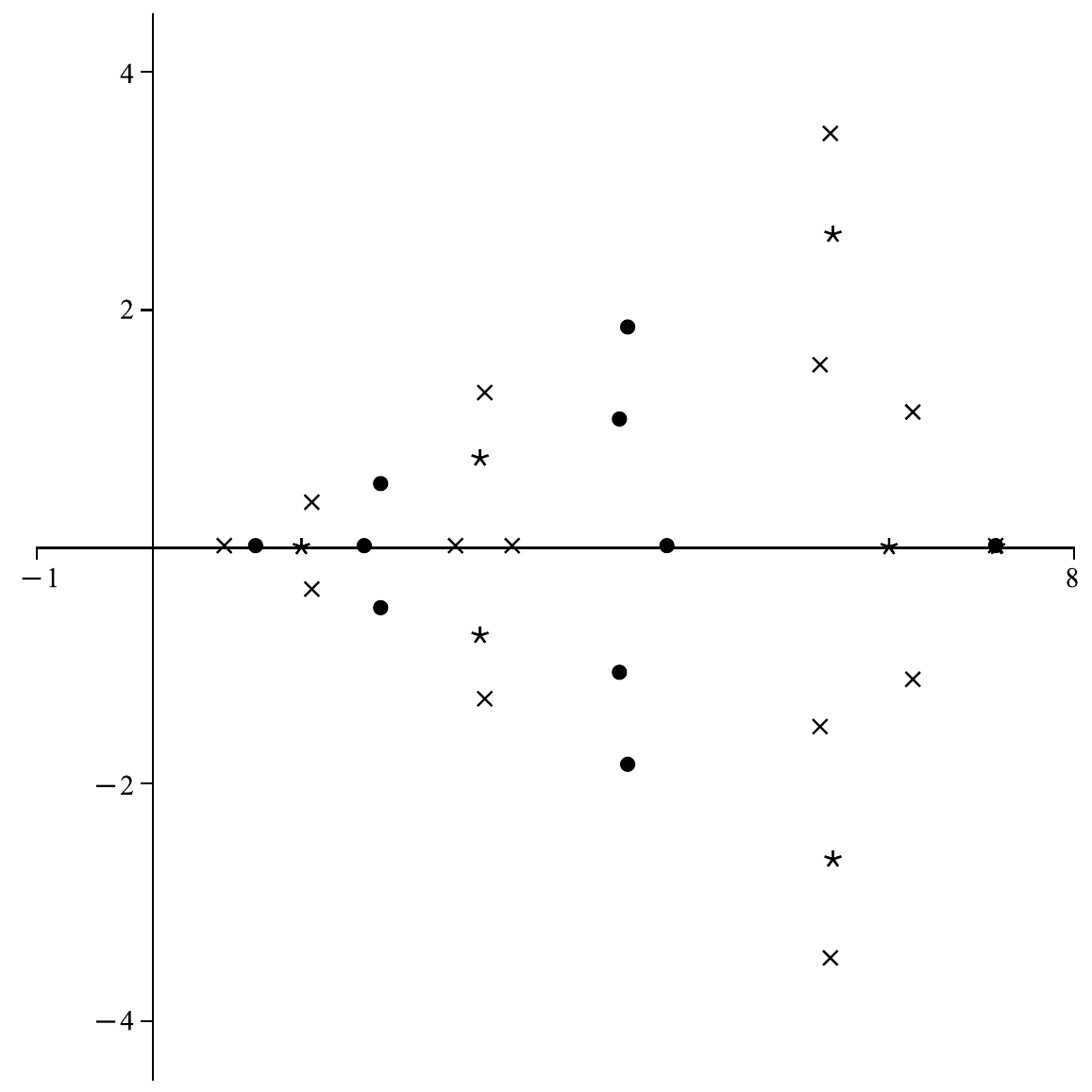}%
\caption{The case $e_0=1$, $q=\frac7{10}$. 
The $\star$ are the locations of the double poles,
the $\bullet$ are the simple zeros, and the $\times$ are stationary points, that is, 
solutions of $w(z)=z$.}
\label{fig1}
\end{figure}

For the case $e_0=1$ and $q=\frac7{10}$ we have computed the Pad\'e approximant of order
[40/40]. In Figure \ref{fig1} we demonstrate that if $w(z)$ has a pole at $z=z_p$,
which seems to be a double pole, then $w(z)$ has simple zeros at $z=q^{\pm1}z_p$,
and stationary points ($w(z)=z$) at $z=q^{\pm2}z_p$. This follows from \eqref{qP1}:
If $w(z)$ has a stationary point at $z=z_0$, then it follows from \eqref{qP1} that either 
$z=qz_0$ or $z=q^{-1}z_0$ is a zero. Say that the zero is at $z=qz_0$, then by keep on
using \eqref{qP1} we obtain that $z=q^2z_0$ is a `pole', $z=q^3z_0$ is another zero,
and $z=q^4z_0$ is another stationary point. In fact, a simple local analysis does show us
that if near the `pole' we have $w(z)\approx -C_1^2\left(z-z_p\right)^{-2\alpha}$, with
$\Re\alpha>0$, then near the zeros we have
$w(z)\approx \pm\frac{q^{\pm\frac12\pm\alpha}}{C_1}\left(z-q^{\mp1}z_p\right)^{\alpha}$.
In the example above we seem to have $\alpha=1$, and for the nearest pole we have
$z_p\approx 1.2946$.

\section*{Acknowledgements}
The authors want to thank the Isaac Newton Institute for Mathematical Sciences for support during the program ‘Applicable resurgent asymptotics: towards a universal theory’ supported by EPSRC Grant No. EP/R014604/1. AOD's research was supported by a research Grant 60NANB20D126 from the National Institute of Standards and Technology, and parts of this research was conducted while visiting the Okinawa Institute of Science and Technology (OIST) through the Theoretical Sciences Visiting Program (TSVP).

\appendix
\section{Minor lemmas}
\begin{lemma}\label{c_growth}
For the sequence $\{c_n\}$ defined by $c_0=1$ and recurrence relation \eqref{crecur} 
we have $\left|c_n\right|\leq \left(\frac{256}{27}\right)^n\left|q\right|^{-\frac12 n(n-1)}$
for $n=0,1,2,\ldots$ and $|q|\leq1$.
\end{lemma}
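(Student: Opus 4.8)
The plan is to prove the bound by strong induction on $n$, after a substitution that peels off the $q$-dependence. Write $Q=|q|\le 1$ and take moduli in \eqref{crecur}, so that $|q^{k-\ell}|=Q^{k-\ell}$. Setting $\beta_n=Q^{\frac12 n(n-1)}|c_n|$, the target inequality becomes simply $\beta_n\le(256/27)^n$. Relabelling the indices by $a=k$, $b=m-k$, $c=\ell$, $d=n-m-\ell$, the triple sum in \eqref{crecur} runs over exactly those $(a,b,c,d)$ with $a+b+c+d=n$ and $a,b,c,d\ge 0$; feeding in the inductive hypothesis $|c_j|\le Q^{-\frac12 j(j-1)}\beta_j$ then turns the recurrence into
\[
  \beta_{n+1}\le\sum_{a+b+c+d=n}Q^{\Phi(a,b,c,d)}\,\beta_a\beta_b\beta_c\beta_d,
  \qquad
  \Phi=\tfrac12 n(n+1)-\tfrac12\!\!\sum_{x\in\{a,b,c,d\}}\!\!x(x-1)+(a-c).
\]

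The crux is to show $\Phi\ge 0$. Using $a+b+c+d=n$ to rewrite $\tfrac12 n^2-\tfrac12\sum x^2$ as the elementary symmetric sum $\sum_{i<j}x_ix_j$ of $\{a,b,c,d\}$, a short computation collapses the exponent to $\Phi=\sum_{i<j}x_ix_j+2a+b+d$, which is manifestly nonnegative. Since $Q\le 1$, every factor $Q^{\Phi}\le 1$, so the $q$-dependence drops out entirely and
\[
  \beta_{n+1}\le\sum_{a+b+c+d=n}\beta_a\beta_b\beta_c\beta_d.
\]
This is precisely the inequality governing a four-fold self-convolution at $q=1$, so comparing with the sequence defined by $\gamma_0=1$ and $\gamma_{n+1}=\sum_{a+b+c+d=n}\gamma_a\gamma_b\gamma_c\gamma_d$ gives $\beta_n\le\gamma_n$ by induction.

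It then remains to identify $\gamma_n$ and bound it by $(256/27)^n$. Its generating function $G(x)=\sum\gamma_n x^n$ satisfies the Fuss--Catalan equation $G=1+xG^4$, whence $\gamma_n=\frac{1}{3n+1}\binom{4n}{n}=\frac{(4n)!}{n!\,(3n+1)!}$. The ratio of consecutive terms is $\gamma_{n+1}/\gamma_n=\frac{(4n+1)(4n+2)(4n+3)(4n+4)}{(n+1)(3n+2)(3n+3)(3n+4)}$, and the claim $\gamma_{n+1}/\gamma_n\le 256/27$ reduces, after clearing denominators and simplifying, to $0\le 432n^2+634n+229$, which holds for every $n\ge 0$. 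As $\gamma_0=1$, this yields $\gamma_n\le(256/27)^n$, and finally $|c_n|=Q^{-\frac12 n(n-1)}\beta_n\le(256/27)^n\,|q|^{-\frac12 n(n-1)}$.

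I expect the main obstacle to be the collapse of $\Phi$ to the nonnegative form $\sum_{i<j}x_ix_j+2a+b+d$. A naive term-by-term estimate (bounding each $Q^{\Phi}\le 1$ and merely counting the $\binom{n+3}{3}$ summands) fails, since that count already exceeds $256/27$ for $n\ge 2$; one genuinely needs the exact convolution structure together with the closed-form Fuss--Catalan comparison rather than crude counting. Once the exponent identity is established, identifying $\gamma_n$ and verifying the polynomial ratio inequality are routine.
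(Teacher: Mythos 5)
Your proof is correct and follows essentially the same route as the paper's: your substitution $\beta_n=|q|^{\frac12 n(n-1)}|c_n|$ is the paper's $c_n=q^{-\frac12 n(n-1)}d_n$, your exponent $\Phi=\sum_{i<j}x_ix_j+2a+b+d$ coincides with the paper's $p(n,m,k,\ell)$ in \eqref{pnmkl} after the relabelling $\ell\mapsto n-m-\ell$ (under which the summand $d_kd_{m-k}d_\ell d_{n-m-\ell}$ is invariant), and your comparison sequence $\gamma_n=\frac{(4n)!}{n!\,(3n+1)!}$ is exactly the paper's $\widetilde d_n=\Gamma(4n+1)/\left(\Gamma(3n+2)\,n!\right)$, obtained from the same functional equation $G=1+xG^4$. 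The only deviation is the final estimate: the paper uses Stirling-type bounds on the factorials, whereas you verify the exact ratio inequality $\gamma_{n+1}/\gamma_n\le\frac{256}{27}$, whose reduction to $0\le 432n^2+634n+229$ checks out, giving a marginally cleaner finish but not a different method.
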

\begin{proof}
We take $c_n=q^{-\frac12 n(n-1)} d_n$, and obtain the recurrence relation
\begin{equation}\label{drecur}
    d_{n+1}=\sum_{m=0}^n\sum_{k=0}^m\sum_{\ell=0}^{n-m} d_k d_{m-k} d_\ell d_{n-m-\ell}
    q^{p(n,m,k,\ell)},\quad n=0,1,2,\ldots,
\end{equation}
with
\begin{equation}\label{pnmkl}
    p(n,m,k,\ell)=m(n-m+1)+k(m-k+1)+\ell(n-m-\ell+1).
\end{equation}
Note that all the terms on the right-hand side of \eqref{pnmkl} are non-negative.
Hence, $p(n,m,k,\ell)\geq0$. 
Thus $\left|q^{p(n,m,k,\ell)}\right|\leq1$.
It follows that for all $n$ we have $|d_n|\leq \widetilde{d}_n$,
where the sequence $\{\widetilde{d}_n\}$ is defined by $\widetilde{d}_0=1$ and 
the recurrence relation
\begin{equation}\label{dtrecur}
    \widetilde{d}_{n+1}=\sum_{m=0}^n\sum_{k=0}^m\sum_{\ell=0}^{n-m} \widetilde{d}_k 
    \widetilde{d}_{m-k} \widetilde{d}_\ell \widetilde{d}_{n-m-\ell},\quad n=0,1,2,\ldots.
\end{equation}
Observe that $\widetilde{v}(z)=\sum_{n=0}^\infty \widetilde{d}_n z^{n}$ is a solution of
$z\widetilde{v}^4(z)=\widetilde{v}(z)-1$.
This can be solved via a Fox--Wright function giving
\begin{equation}\label{FW}
\widetilde{v}(z) = \sum_{n = 0}^\infty\frac{\Gamma (4n + 1)}{\Gamma(3n + 2)}\frac{z^n}{n!}
\end{equation}
(cf. \cite[Eq. (9.17)]{Belkic2019}). Thus
\begin{gather}\label{d0-d4}
    \begin{split}
\left|d_n (q)\right| \le \widetilde{d}_n=\frac{\Gamma (4n + 1)}{\Gamma (3n + 2)n!} & \le
\frac{ (4n)^{4n + 1/2} \expe^{ - 4n} \expe }{(3n + 1)(3n)^{3n + 1/2} \expe^{ - 3n} \sqrt {2\pi } n^{n + 1/2} \expe^{ - n} \sqrt {2\pi } } \\ & = \frac{\expe}{\pi \sqrt 3 }\frac{1}{\sqrt n (3n + 1)}\left( \frac{256}{27} \right)^n  \le \left( \frac{256}{27} \right)^n 
    \end{split}
\end{gather}
using
\begin{equation}\label{fact}
n^{n + 1/2} \expe^{ - n} \sqrt {2\pi }  \le n! \le  n^{n + 1/2} \expe^{ - n} \expe.
\end{equation}

\end{proof}


\begin{lemma}\label{dinfo}
The coefficients defined in \eqref{drecur} are
\begin{gather}\label{d0-d4}
    \begin{split}
        d_0=d_1&=1,\\
        d_2&=1+2q+q^2,\\
        d_3&=1+2q+5q^2+6q^3+5q^4+2q^5+q^6,\\
        d_4&=1+2q+5q^2+10q^3+16q^4+23q^5+26q^6+23q^7+16q^8+10q^9+5q^{10}+2q^{11}+q^{12},\\
        d_5&=1+2q+5q^2+10q^3+20q^4+32q^5+52q^6+75q^7+101q^8+\cdots+5q^{18}+2q^{19}+q^{20}.
    \end{split}
\end{gather}
In general, $d_n(q)$ is a polynomial of degree $n(n-1)$ with positive integer coefficients, and the first $n$
coefficients are the same for $d_n, d_{n+1}, d_{n+2}, \ldots$. Hence, $d_n(0)=1$. We also have the symmetry
$d_n(q)=q^{n(n-1)}d_n(1/q)$.
\end{lemma}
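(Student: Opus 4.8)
The plan is to prove all the assertions simultaneously by strong induction on $n$, after first reindexing the triple sum in \eqref{drecur}. Writing $(a,b,c,e)=(k,\,m-k,\,\ell,\,n-m-\ell)$, the summation runs exactly once over all four-tuples of non-negative integers with $a+b+c+e=n$, the corresponding term being $d_a d_b d_c d_e\,q^{p}$. In these variables the exponent \eqref{pnmkl} collapses to the clean identity $p=S+2a+b+c$, where $S=\sum_{i<j}x_ix_j$ is the second elementary symmetric function of $\{a,b,c,e\}$; this identity is the computational backbone of everything that follows, so I would record it first. The base cases, namely the displayed formulas for $d_0,\dots,d_5$, are a direct if tedious evaluation of \eqref{drecur}.

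Non-negativity and integrality of the coefficients are then immediate: every exponent in \eqref{pnmkl} is a non-negative integer and, inductively, each $d_j$ is a polynomial with non-negative integer coefficients, hence so is $d_{n+1}$. For the degree I would substitute $\deg d_j=j(j-1)$ into the term degree $T=D+p$, with $D=\sum x_i(x_i-1)=\sum x_i^2-n$; combining this with $p=S+2a+b+c$ and $\sum x_i^2=n^2-2S$ gives $T=n^2-n-S+(2a+b+c)$. Since $S\ge 0$ and $2a+b+c\le 2n$, with simultaneous equality forced only by $(a,b,c,e)=(n,0,0,0)$, the maximal degree $n(n+1)$ is attained by the single term $d_n\,q^{2n}$; hence $\deg d_{n+1}=(n+1)n$ and its leading coefficient equals that of $d_n$, which is $1$ by induction.

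The stabilization of the low-order coefficients rests on the inequality $p(n,m,k,\ell)\ge n$ for every index triple other than $(0,0,0)$. Indeed, if $m\ge1$ then $p\ge m(n+1-m)\ge n$ for $1\le m\le n$, while if $m=0$ then $k=0$ and $p=\ell(n+1-\ell)\ge n$ whenever $1\le\ell\le n$; the only term with $p=0$ is $d_0^3 d_n=d_n$. Therefore $d_{n+1}-d_n=\bigO(q^n)$, so $d_{n+1}$ and $d_n$ share their first $n$ Taylor coefficients, and in particular $d_n(0)=d_0(0)=1$.

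The symmetry $d_n(q)=q^{n(n-1)}d_n(1/q)$ is the crux, and I would deduce it from an involution on the index set. Assuming the relation for all indices $\le n$, substitute $q\mapsto1/q$ in \eqref{drecur}, apply $d_j(1/q)=q^{-j(j-1)}d_j(q)$ to each factor, and multiply by $q^{n(n+1)}$; the term attached to $(a,b,c,e)$ then carries exponent $n(n+1)-D-p$. The reversal $\iota:(a,b,c,e)\mapsto(e,c,b,a)$ is an involution of the four-tuples preserving the product $d_ad_bd_cd_e$, and from $p=S+2a+b+c$ one checks $p(\mathbf{x})+p(\iota\mathbf{x})=2S+2n=n(n+1)-D$, so $\iota$ sends $n(n+1)-D-p(\mathbf{x})$ to $p(\iota\mathbf{x})$; reindexing by $\iota$ therefore identifies $q^{n(n+1)}d_{n+1}(1/q)$ with $d_{n+1}(q)$. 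Finally, positivity of \emph{every} coefficient follows by assembling the previous steps: stabilization makes $d_{n+1}$ agree with the inductively positive $d_n$ on the powers $0,\dots,n-1$ and dominate it on $n,\dots,n(n-1)$, so $d_{n+1}$ is positive on $[0,n(n-1)]$, and the symmetry transports this to $[2n,n(n+1)]$; for $n\ge3$ these ranges cover $\{0,\dots,n(n+1)\}$, the finitely many remaining cases being the tabulated $d_0,\dots,d_3$. I expect the involution identity $p(\mathbf{x})+p(\iota\mathbf{x})=n(n+1)-D$ to be the main obstacle — everything else is bookkeeping once the reparametrization $p=S+2a+b+c$ is in hand — with a minor subtlety in verifying that the two positivity ranges actually overlap for the smallest $n$.
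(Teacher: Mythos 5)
Your proof is correct, and although it shares its skeleton with the paper's, it genuinely goes further on two of the four claims. The common core is the stabilization step: like the paper, you show $p(n,m,k,\ell)=0$ only at $(m,k,\ell)=(0,0,0)$ and $p\geq n$ otherwise, whence $d_{n+1}-d_n=\bigO(q^n)$; the paper additionally pins down the four index triples where $p=n$ exactly, giving $d_{n+1}-d_n=4q^n+\bigO\left(q^{n+1}\right)$, a refinement it needs in Lemma \ref{localStokes} but which your framework also delivers at once, since $p=S+2a+b+c$ equals $n$ precisely at the tuples $(0,1,0,n-1)$, $(0,0,1,n-1)$, $(0,n,0,0)$, $(0,0,n,0)$. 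Where you depart from the paper is everything it dismisses with ``follows, via induction'': your reindexing over four-tuples with $a+b+c+e=n$ and the identity $p=S+2a+b+c$ (which checks out: $(a+b)(c+e+1)+a(b+1)+c(e+1)=S+2a+b+c$) yield a clean unique-maximum argument for the degree statement, with $T=n^2-n-S+(2a+b+c)$ maximized only at $(n,0,0,0)$; and, most valuably, the involution $\iota\colon(a,b,c,e)\mapsto(e,c,b,a)$ together with $p(\mathbf{x})+p(\iota\mathbf{x})=2S+2n=n(n+1)-D$ constitutes an actual proof of the palindromic symmetry $d_n(q)=q^{n(n-1)}d_n(1/q)$, which the paper's proof does not address at all (one should note, as you implicitly do, that $D$ and the product $d_ad_bd_cd_e$ are $\iota$-invariant, both being symmetric in the tuple). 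Your assembly of \emph{strict} positivity is likewise more careful than the paper's bare assertion: agreement with the inductively positive $d_n$ on powers $0,\ldots,n-1$, coefficientwise domination on $[0,n(n-1)]$ because $d_{n+1}-d_n$ has non-negative coefficients, symmetry transporting positivity to $[2n,n(n+1)]$, and the overlap condition $n^2-3n+1\geq0$ for $n\geq3$, with the tabulated small cases closing the induction. In short: same stabilization idea, but your symmetric coordinates buy rigorous proofs of the degree, symmetry, and strict-positivity claims that the paper leaves as exercises.
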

\begin{proof}
The statement about the degree and the positive integer coefficients follows, via induction, from recurrence relation
\eqref{drecur}. Let $n\geq2$. We do observe from \eqref{pnmkl} that $p(n,m,k,\ell)=0$ only when $m=k=\ell=0$,
and the smallest non-zero value for $p(n,m,k,\ell)$ is $n$, and this happens when $(m,k,\ell)=(1,0,0)$, 
$(m,k,\ell)=(0,0,1)$, $(m,k,\ell)=(n,0,0)$, and $(m,k,\ell)=(0,0,n)$. Hence, \eqref{drecur} can be presented as
\begin{equation}\label{drecur2}
    d_{n+1}-d_n=q^n\left(2d_{n-1}+2d_n\right)+\cdots,\qquad \Longrightarrow\qquad
    d_{n+1}-d_n=4q^n+h.o.t.
\end{equation}
\end{proof}

\begin{lemma}\label{localStokes}
For coefficients defined in \eqref{drecur} we use the notation
\begin{equation}\label{nm}
    d_n(q)=\sum_{m=0}^{n(n-1)}d_{n,m}q^m.
\end{equation}
Then for $|q|\leq\frac{27}{256}$ the Stokes multiplier $K_0(q)$ has the expansion
\begin{equation}\label{K0expansion}
    K_0(q)=1+\sum_{n=1}^{\infty}d_{n+1,n}q^n,
\end{equation}
and we have
\begin{equation}\label{ResurgenceRigorious}
    c_n(q)\sim q^{-\frac12n(n-1)}K_0(q),
\end{equation}
as $n\to\infty$.
\end{lemma}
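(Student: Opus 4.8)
The plan is to work throughout in the closed disk $|q|\le\tfrac{27}{256}$ and to identify $K_0(q)$, defined in \S\ref{Sect:StMult} as the residue of $\b1(t)$ at $t=-1$, with the coefficient-wise limit of the polynomials $d_n(q)$. The heuristic is \eqref{Kj}: since $d_n(q)=q^{\frac12n(n-1)}c_n\sim\sum_{j\ge0}K_j(q)q^{j(n+1)}$ and $|q|<1$, all $j\ge1$ terms vanish as $n\to\infty$, so $d_n(q)\to K_0(q)$, which is exactly \eqref{ResurgenceRigorious}. To make this rigorous without presupposing the global pole structure, I would instead argue directly from the two facts already available: by Lemma \ref{dinfo} the coefficient $d_{n,m}$ of $q^m$ in $d_n(q)$ is independent of $n$ once $n\ge m+1$, so writing $D_m:=d_{m+1,m}$ (with $D_0=1$) the candidate sum is $\Phi(q):=\sum_{m\ge0}D_mq^m=1+\sum_{n\ge1}d_{n+1,n}q^n$, precisely the right-hand side of \eqref{K0expansion}. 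The lemma then reduces to: (i) $\Phi$ converges on the disk, (ii) $d_n(q)\to\Phi(q)$ there, and (iii) this limit is the residue $K_0(q)$.

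For (i) I would use the bound from the proof of Lemma \ref{c_growth}. As $d_n(q)$ has non-negative coefficients, each is at most $d_n(1)=\widetilde d_n$, and the Stirling estimate there gives $\widetilde d_n\le \tfrac{C}{n^{3/2}}\bigl(\tfrac{256}{27}\bigr)^n$. Hence $0\le D_m=d_{m+1,m}\le\widetilde d_{m+1}\le\tfrac{C'}{m^{3/2}}\bigl(\tfrac{256}{27}\bigr)^m$, so $\Phi$ converges absolutely for $|q|\le\tfrac{27}{256}$, the factor $m^{-3/2}$ being exactly what secures convergence on the boundary circle.

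For (ii)--(iii) I would estimate the tail. Since the first $n$ coefficients of $d_n$ and of $\Phi$ agree,
\[
 d_n(q)-\Phi(q)=\sum_{m=n}^{n(n-1)}d_{n,m}q^m-\sum_{m=n}^{\infty}D_mq^m,
\]
and the same Stirling estimate bounds both sums by a constant times $\sum_{m\ge n}m^{-3/2}\theta^m$ with $\theta=\tfrac{256}{27}|q|\le1$; this is $O(\theta^n)$ for $|q|<\tfrac{27}{256}$ and $o(1)$ at the boundary, establishing \eqref{ResurgenceRigorious} via $d_n(q)=q^{\frac12n(n-1)}c_n$. Writing $d_n(q)=\Phi(q)+r_n$ then gives, from \eqref{qBorel}, $\b1(t)=\sum_n d_n(q)(-t)^n=\dfrac{\Phi(q)}{1+t}+R(t)$ with $R(t)=\sum_n r_n(-t)^n$ analytic in $|t|<\theta^{-1}$, a disk containing $t=-1$. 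Thus for $|q|<\tfrac{27}{256}$ the function $\b1$ has a single simple pole at $t=-1$ there (its dominant singularity, since $\Phi(0)=1\ne0$) with residue $\Phi(q)$; by the definition of $K_0$ this yields \eqref{K0expansion}, and the boundary case $|q|=\tfrac{27}{256}$ follows by continuity, both sides being continuous up to the boundary.

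The main obstacle I anticipate is purely analytic and lives at the radius $\tfrac{27}{256}$: the crude bound $\widetilde d_n\le(\tfrac{256}{27})^n$ yields only a bounded, not a vanishing, tail there, so the argument genuinely needs the sharper $n^{-3/2}$ refinement to carry convergence and the limit onto the closed disk. The second delicate point is logical rather than computational, namely converting the coefficient-wise limit into a statement about the residue: I must confirm that $R(t)$ is analytic across $t=-1$, so that $\Phi(q)$ is exactly the residue and $t=-1$ is a genuine simple pole (and the nearest singularity) rather than part of a denser singular set. The geometric tail bound $|r_n|\le C\theta^n$ with $\theta<1$ on the open disk is precisely what supplies this, which is why the rigorous identification is confined to $|q|\le\tfrac{27}{256}$ while the closed-form guess \eqref{K0} for larger $q$ remains only numerically supported.
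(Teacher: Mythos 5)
Your proposal is correct, but it takes a genuinely different route from the paper's own proof. The paper telescopes the differences $d_{n+1}(q)-d_n(q)$: since by Lemma \ref{dinfo} these polynomials are divisible by $q^n$, Cauchy's formula on $|t|=1$ combined with the crude bound $|d_n(t)|\le M^n$, $M=\frac{256}{27}$, from Lemma \ref{c_growth} yields $|d_{n+1}(q)-d_n(q)|\le 2M(M|q|)^n/(1-|q|)$; hence $\{d_n(q)\}$ is uniformly Cauchy on compact subsets of $|q|<1/M$, and its limit is simply \emph{named} $K_0(q)$, with the geometric error bound \eqref{dCauchy4}. You instead exploit the positivity of the coefficients $d_{n,m}$ (so $d_{n,m}\le d_n(1)=\widetilde d_n$) together with the refined Stirling estimate $\widetilde d_n=\bigO(n^{-3/2}M^n)$, which is indeed already present in the proof of Lemma \ref{c_growth}, and estimate the two tails directly. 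This buys two things the paper does not deliver: first, the $n^{-3/2}$ factor makes both tails $o(1)$ even when $M|q|=1$, so you get convergence on the \emph{closed} disk $|q|\le\frac{27}{256}$, exactly matching the lemma's statement, whereas the paper's geometric bound degenerates at the boundary and its proof as written only covers $|q|\le\rho/M<1/M$; second, your decomposition $\b1(t)=\frac{\Phi(q)}{1+t}+R(t)$ with $R$ analytic in $|t|<(M|q|)^{-1}$ rigorously identifies the limit with the residue of $\b1(t)$ at $t=-1$, i.e.\ with the Stokes multiplier as actually defined in \S\ref{Sect:StMult} --- a step the paper sidesteps entirely, since there the meromorphy of $\b1(t)$ is only observed via Pad\'e approximants. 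Two small points to tidy: for the inner tail it is cleaner to note that $|q|^m\le|q|^n$ for $m\ge n$ gives $\sum_{m\ge n}d_{n,m}|q|^m\le|q|^n\widetilde d_n\le Cn^{-3/2}(M|q|)^n$, rather than bounding $d_{n,m}$ by $Cm^{-3/2}M^m$ (which is true but requires checking $(m/n)^{3/2}\le M^{m-n}$); and your parenthetical that $t=-1$ is the dominant singularity needs $\Phi(q)\ne0$ on the whole disk, which $\Phi(0)=1$ alone does not give --- harmless, since \eqref{K0expansion} and \eqref{ResurgenceRigorious} only need the residue computation, not non-vanishing. The continuity appeal at $|q|=\frac{27}{256}$ for the residue interpretation is the one remaining soft spot, but since your tail estimate already gives $d_n(q)\to\Phi(q)$ uniformly up to the boundary, both displayed claims of the lemma hold there without it, and on this point your argument is in fact more complete than the paper's.
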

\begin{proof}
In Lemma \ref{c_growth} we showed that $\left|d_n (q)\right| \le M^n$ with $M=\frac{256}{27}$, and in Lemma \ref{dinfo} we showed that 
$d_{n + 1} (q) - d_n (q) = 4q^n  + \bigO(q^{n + 1} )$
for $n\ge 2$. Thus by Cauchy's formula
\begin{equation}\label{dCauchy}
d_{n + 1} (q) - d_n (q) = \frac{q^n}{2\pi \iunit}\oint_{|t| = 1} \frac{d_{n + 1} (t) - d_n (t)}{(t - q)t^n}\id t 
\end{equation}
for $|q|<1$ and $n\ge 2$. Then
\begin{gather}\label{dCauchy2}
\begin{split}
| d_{n + 1} (q) - d_n (q)| &\le \frac{\left|q\right|^n }{2\pi }\oint_{|t| = 1} 
\frac{|d_{n + 1} (t) - d_n (t)|}{|t - q|\left|t\right|^n }|\id t|\\
&\le 2M^{n + 1} \frac{\left|q\right|^n}{2\pi }\oint_{|t| = 1} \frac{|\id t|}{|t - q|}  = 2M\frac{\left(M|q|\right)^n }{1 - |q|}
\end{split}
\end{gather}
for $|q|<1$ and $n\ge 2$. Fix $\rho\in(0,1)$ and assume that $|q| \le \frac{\rho }{M} < \frac{1}{M}
$ and $m>n\ge 2$. Then
\begin{gather}\label{dCauchy3}
\begin{split}
|d_m (q) - d_n (q)| &\le \sum_{k = n}^{m - 1} |d_{k + 1} (q) - d_k (q)|  \le \frac{2M}{1-|q|}\sum_{k = n}^{m - 1} \left(M|q|\right)^k \\
&= \frac{2M}{1 - |q|}\frac{\left(M|q|\right)^n  - \left(M|q|\right)^m }{1 - M|q|} \le \frac{2M^2 }{M- 1}\frac{\rho ^n }{1-\rho }.
\end{split}
\end{gather}
Thus $\{ d_n (q)\}$ is uniformly Cauchy on compact subsets of $|q| < \frac{1}{M}
$. Therefore, it has a limit that is analytic on $|q| < \frac{1}{M}$. We can call it $K_0(q)$. Taking $m\to  \infty$ in the above, we have
\begin{equation}\label{dCauchy4}
|K_0 (q) - d_n (q)| \le \frac{2M^2}{M - 1}\frac{\left(M|q|\right)^n }{1 - \rho }
\end{equation}
for $|q| \le \frac{\rho }{M} < \frac{1}{M}
$ and $n\ge 2$. We can use this and induction to show that both \eqref{K0expansion} 
and \eqref{ResurgenceRigorious} hold.
It would be nice to show that $K_0(q)$ is actually analytic in $|q|<1$.
\end{proof}

\section{A simple nonlinear equation}\label{AppRiccati}
The nonlinear equation
\begin{equation}\label{simpleeq}
    w(z)w(z/q)=w(z)-z,
\end{equation}
is a much simpler version of \eqref{qP1}, and a formal divergent solution
is of the form $\displaystyle\widehat{w}(z)=\sum_{n=1}^\infty \widetilde{c_n}z^n$.
We use the `Riccati'-ansatz $\displaystyle w(z)=z\frac{h(z)}{h(qz)}$
and obtain for $h(z)$ the linear equation
$h(z)-h(qz)=\frac{z}{q} h(z/q)$. In this way we obtain the solution
\begin{equation}\label{simpleRic}
    w(z)=z\frac{\qphi{2}{0}{0,0}{-}{q}{-z/q}}{\qphi{2}{0}{0,0}{-}{q}{-z}},
\end{equation}
which can be used to prove that
\begin{equation}
    \widetilde{c_n}\sim\frac{q^{-\frac12 n(n-1)}}{\left(q;q\right)_\infty},
\end{equation}
as $n\to\infty$.

We try a similar `Riccati'-ansatz for our main equation \eqref{qP1a}
via $\displaystyle v(z)=\frac{h(z)}{h(qz)}$ and obtain the equation
\begin{equation}\label{Riccati}
    zh(z)h(z/q)=h(z)h(zq^2)-h(zq)h(zq^2).
\end{equation}
(Is it possible to make one more step and obtain a linear equation?)
We are interested in the formal solution
$\displaystyle\widehat{h}(z)=\sum_{n=0}^\infty e_nz^n$.
The coefficients have the recurrence relation
\begin{equation}\label{RiccatiCoeff}
    \left(1-q^n\right) e_n=
    \sum_{m=0}^{n-1}e_m e_{n-m-1} q^{-m} -
    \sum_{m=1}^{n-1}e_m e_{n-m}\left(q^{2m}-q^{n+m}\right),
\end{equation}
with $e_0$ arbitrary. We take $e_0=1$. These $e_n$ have the same behaviour as
our main $c_n$, but the recurrence relation \eqref{RiccatiCoeff} seems
much simpler than \eqref{crecur}. We take $e_n=q^{-\frac12 n(n-1)} f_n$.
These $f_n$ are very similar to $d_n$: They do satisfy
\begin{equation}\label{fneq}
    \sum_{m=0}^{n-1}f_m f_{n-m-1} q^{(m+1)(n-m-1)} =
    \sum_{m=0}^{n-1}f_m f_{n-m} q^{m(n-m+2)}\left(1-q^{n-m}\right),
\end{equation}
and again $f_5=1+2q+5q^2+10q^3+20q^4+34q^5+\cdots$. Ideally we show now that
\begin{equation}\label{fnasymp}
    f_n\sim\frac1{\left(q;q\right)_\infty^2},\qquad\qquad{\rm or}\qquad\qquad
    f_n=\frac1{\left(q;q\right)_n^2}+\bigO\left(q^n\right),
\end{equation}
as $n\to\infty$. Taylor series expansions, and numerics does show that \eqref{fnasymp}
is correct.


\end{document}